%
%
%
%
%
\RequirePackage{fix-cm}
\documentclass[smallextended]{svjour3}       
\smartqed  
\usepackage{graphicx}
%
%
%
%
%


\def\RR{I\!\!R}
\def\NN{I\!\! N}

\usepackage{amsmath}
\begin{document}

\title{On some $\overrightarrow{p(x)}$ anisotropic elliptic equations in unbounded domain
}


\author{A.  Aberqi         \and
        B. Aharrouch \and J. Bennouna 
}


\institute{A.  Aberqi \at
           Sidi Mohamed Ben Abdellah University; National School of Applied Sciences, Fez, Morocco    \\
              \email{aberqi$\_$ahmed@yahoo.fr}           
           \and
           B. Aharrouch \at
              Sidi Mohamed Ben Abdellah University, Faculty of Sciences Dhar El Mahraz
		Laboratory LAMA, Fez,Morocco\\
               \email{bnaliaharrouch@gmail.com} 
            \and
           J. Bennouna \at
           Sidi Mohamed Ben Abdellah University, Faculty of Sciences Dhar El Mahraz
		Laboratory LAMA, Fez,Morocco\\
           \email{jbennouna@hotmail.com} }

\date{Received: date / Accepted: date}

\maketitle

\begin{abstract}
We study a class of nonlinear elliptic problems with Dirichlet conditions in the framework of the Sobolev anisotropic spaces with variable exponent, involving an anisotropic operator on an unbounded domain $\Omega\subset \>I\!\!R^{N}\>(N \geq 2)\>$.\\
We prove the existence of entropy solutions avoiding sign condition and coercivity on the lowers order terms.
\keywords{Anisotropic sobolev spaces \and Nonlinear elliptic  problems \and Entropy solution \and Unbounded domain}
 \subclass{Primary 35J65 \and Secondary 46A32\and 47D20}
\end{abstract}

\section{Introduction}
Partial differential equations with non-standard growth have been widely studied in recent years, it thus finds applications in different fields of physics, image processing, filtration in porous media, optimal control and electrorheological fluids (smart fluids) which change their mechanical properties dramatically when
an external electric field is applied, for a model in the case of isothermal, homogeneous incompressible smart fluids see e.g. Rajagopal and Rusi$\widetilde{c}$ka (\cite{Rajagopal}), see also (\cite{Halsey}).\\

This paper concerns the following problem:
\begin{equation}
    (\mathcal{P}) \left\{\begin{array}{ll}
-\mbox{div}(a(x,u,\nabla u))+ H(x,u,\nabla u)+ |u|^{p_0(x)-2}u= f \quad \mbox{in}\quad \Omega,\\
u=0 \quad \mbox{on} \quad \quad \partial \Omega,\\
    \end{array}%
    \right.
    \end{equation}
 where $\displaystyle a: \Omega\times \RR\times\RR^{N}\to \RR^{N}$ with $a(x,s,\xi)=\Big(a_{1}(x,s,\xi),...,a_{N}(x,s,\xi)\Big)$, we assume that
 $a_{i}(x,s,\xi)$ for $i=1,...,N$ and $H(x,s,\xi)$ are Carath\'eodory functions satisfying assumptions ((\ref{eqa1})--(\ref{eqh})) below.\\
The exponent $p_0(x)$ is a measurable function defined on arbitrary domain $\Omega$; which satisfies (\ref{eq0}), the source $f$ is merely integrable.\\
$\bullet$ Starting by the case where the domain $\Omega $ is bounded in $\RR^{N}$, and consider the following problem:
\begin{equation}\label{mod1}
-\sum_{i=1}^{i=N}\partial_{x_{i}}\big(a_{i}(x)|u_{x_{i}}|^{p_{i}-2}u_{x_{i}}\big)- \sum_{i=1}^{i=N}\partial_{x_{i}}g_{i}(u)+|u|^{p_{0}-2}u=f(x),
\end{equation}
$x\in \Omega \subset \RR^{N}$, \,  $\Omega$ is a bounded domain.\\
The model (\ref{mod1}) is very well understood, in the isotropic case, i.e. $\displaystyle \overrightarrow{p}=(p_{1},...,p_{N})$\\
$= p\equiv \mbox{cte}$, for an lucid, yet precise comprehensive papers see (\cite{Boccardo}, \cite{Dall},\cite{Murrat},\cite{Porretta}).\\
For the anisotropic operator with polynomial growth i.e. $\displaystyle \overrightarrow{p}=(p_{1},p_{2},...,p_{N})$\\
$\, p_{i}\in \RR$ we mention the reference works of A. G. Korolev (\cite{Korolev}) and N. T. Chung et al. (\cite{Chung}), for more works in the classical anisotropic spaces $W^{1,\overrightarrow{p}}(\Omega)$ we refer the reader to (\cite{Antontsev},\cite{Boan}, \cite{Bouj}, \cite{B.A.B.A}, \cite{Chung1}, \cite{Di}, \cite{Fra}).\\
Now for the operators governed by non-standard growth, namely,\\
$\displaystyle \overrightarrow{p}(.)=(p_{1}(.),p_{2}(.),...,p_{N}(.))$ where $\displaystyle p_{i}:\Omega \to \RR$ are measurable functions, an excellent introduction is in (\cite{Fan}), another sources are (\cite{A.B.H.Y}, \cite{bendahmane2013nonlinear}, \cite{Boureanu}, \cite{Elhamdaoui}
\cite{Ouaro}).\\
$\bullet$ In the arbitrary domain $\Omega$, there are many studies which establish the existence of solutions in an unbounded domain, in particular, Bendahmanne and Karlsen (\cite{B.Karlsen}) proved the solvability and regularity of (\ref{mod1}) where $x$ lies in $\RR^{N}$, in the classical anisotropic spaces, (\cite{Bokalo}) and (\cite{Domanska}) solved (\ref{mod1}),
in the framework of anisotropic spaces with variable exponents, without lower order terms $g$ and perturbation $|u|^{p_{0}(x)-2}u$, they have shown the well-posedness
without constraint on the growth.\\
Our paper continues the work in this direction. We will show the existence of entropy solutions of (\ref{mod1}) with a general operator of type Leray-lions, and the presence of a lower order $\displaystyle g(x,u,\nabla u)$ which do not satisfy the sign condition, and a perturbation  $|u|^{p_{0}(x)-2}u$ in arbitrary domain of $\RR^{N}$.\\
Let us summarize the outline of this paper: In Section 2 we recall some basic notations and Sobolev inequality for anisotropic Sobolev spaces. Our main results are stated in Section 3, while the appendix in Section 4.
\section{Framework Space}
Let $\Omega$ be an unbounded domain of $\RR^{N}$, $N\geq 2$, we denote\\
$$ \mathcal{C}_{+}(\Omega)=\{ \mbox{measurable function} \,\,  p(\cdot): \Omega \to \RR, \, \mbox{such that} \, 1< p^{-} \leq p^{+} < \infty \}$$
where
$$ p^{-}= \mbox{essinf}\{p(x) \, / \, x\in \Omega\}\,\quad \quad   \mbox{and} \quad \quad\, p^{+}= \mbox{esssup}\{p(x) \, / \, x\in \Omega\}$$
In this section we present the anisotropic variable exponent Sobolev space, used in the study of the
 elliptic problem (1).\\
Let $p_{0}(x), p_{1}(x), ..., p_{N}(x) $ be $N+1$ variable exponents in $ \mathcal{C}_{+}(\Omega)$. We denote\\
$\displaystyle \overrightarrow{p}(\cdot)=\Big(p_{0}(\cdot), p_{1}(\cdot), ..., p_{N}(\cdot)\Big)\in \Big(\mathcal{C}_{+}(\Omega)\Big)^{N+1}$, and $\displaystyle \underline{p}=\min\Big(p^{-}_{0}, p^{-}_{1}, ..., p^{-}_{N}\Big)$ and $\displaystyle \overline{p}=\max\Big(p^{+}_{0}, p^{+}_{1},..., p^{+}_{N}\Big)$. Denoting by $L^{\overrightarrow{p}(\cdot)}(\Omega)$ the product space $\displaystyle \prod_{i=1}^N L^{p_{i}}(\Omega)$ endowed with the product norm
$\displaystyle ||u||_{\overrightarrow{p}(\cdot)}=\sum_{i=1}^N||u||_{p_{i}}$. We define the anisotropic variable exponent Sobolev space $W^{1,\overrightarrow{p}(\cdot)}(\Omega)$ as follow:
$$W^{1,\overrightarrow{p}(\cdot)}(\Omega)=\{u \in L^{p_{0}(\cdot)}(\Omega), \quad  \frac{\partial u}{\partial x_{i}} \in L^{p_{i}(\cdot)}(\Omega),\quad  \mbox{for} \quad \, i=1,...N\},$$
where $L^{p_{i}(\cdot)}(\Omega)$, for $i=0,...,N$ are the Lebesgue spaces with variable exponent $p_{i}(\cdot)$. We define also the space $\displaystyle W_{0}^{1,\overrightarrow{p}(\cdot)}(\Omega)$ as the closure of $\mathcal{C}_{0}^{\infty}(\Omega)$ in $W^{1,\overrightarrow{p}(\cdot)}(\Omega)$
with respect to the norm $\displaystyle||u||_{1,\overrightarrow{p}(\cdot)}=\sum_{i=0}^N\Big\|\frac{\partial u}{\partial x_{i}}\Big\|_{p_{i}(\cdot)}$.
The space $\displaystyle \Big(W_{0}^{1,\overrightarrow{p}(\cdot)}(\Omega), ||.||_{1,\overrightarrow{p}(\cdot)}\Big)$ is a Banach reflexive space see \cite{Fan}.\\
Let $\displaystyle \overline{p}(x)=N\Big(\sum_{i=1}^N\frac{1}{p_{i}(x)}\Big)^{-1}$,
 $\displaystyle p_{*}(x)=\left\{\begin{array}{lll}
\frac{N\overline{p}(x)}{N-\overline{p}(x)} & \mbox{if}&\overline{p}(x)\leq N,\\
+\infty & \mbox{if}& \overline{p}(x)\geq N.\\
    \end{array}%
    \right.$\\
    And $\displaystyle p_{\infty}=\max(p_{*}(x),p_{+}(x))\cdot$
\subsection{Basic Lemmas}
\begin{lemma}
Let $Q$ be a bounded domain of $\RR^{N}$ and $\vec{p}(\cdot)=(p_1(\cdot), p_2(\cdot),...,p_N(\cdot))\in (\mathcal{C}^+(\bar{Q}))^N.$ If $q(\cdot)\in\mathcal{C}^+(\bar{Q})$ and $$q(x)<p_{\infty}(x)~~\forall x\in Q,$$
then the embedding $W_0^{1,\vec{p}(\cdot)}(Q)\subset L^{q(\cdot)}(Q)$ is continuous and compact.
\end{lemma}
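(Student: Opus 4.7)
The plan is to reduce the claim to the classical constant-exponent anisotropic Sobolev embedding (Troisi) via a double inclusion. First I would exploit the fact that $Q$ is bounded: since $p_{i}(x)\ge p_{i}^{-}$ one has $L^{p_{i}(\cdot)}(Q)\hookrightarrow L^{p_{i}^{-}}(Q)$ continuously, and applying this to the function and each partial derivative yields a continuous inclusion
$$W_{0}^{1,\vec{p}(\cdot)}(Q)\;\hookrightarrow\; W_{0}^{1,\vec{p}^{\,-}}(Q),$$
where $\vec{p}^{\,-}=(p_{1}^{-},\dots,p_{N}^{-})$ is a constant exponent vector.

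Next I would invoke the Troisi inequality for the constant-exponent anisotropic Sobolev space: with $\bar{p}^{\,-}=N\bigl(\sum_{i=1}^{N}1/p_{i}^{-}\bigr)^{-1}$ and $(p^{-})_{\infty}$ defined analogously to the variable-exponent case, one obtains a continuous embedding into $L^{(\bar{p}^{\,-})^{*}}(Q)$ and a compact embedding $W_{0}^{1,\vec{p}^{\,-}}(Q)\hookrightarrow\hookrightarrow L^{s}(Q)$ for every $1\le s<(p^{-})_{\infty}$. Composed with Step 1 this already delivers compactness from $W_{0}^{1,\vec{p}(\cdot)}(Q)$ into every Lebesgue space with constant exponent strictly below $(p^{-})_{\infty}$.

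It remains to transfer the target space from a constant exponent $L^{s}(Q)$ to the variable exponent $L^{q(\cdot)}(Q)$. On a bounded domain one has $L^{s}(Q)\hookrightarrow L^{q(\cdot)}(Q)$ continuously whenever $q(x)\le s$ almost everywhere, so the whole chain would close if some $s$ with $q^{+}\le s<(p^{-})_{\infty}$ were available globally. When this fails, I would localize: on the compact set $\bar Q$ the pointwise strict inequality $q(x)<p_{\infty}(x)$ combined with a covering by small balls on which $\vec{p}(\cdot)$ and $q(\cdot)$ are nearly constant produces, on each piece, a usable constant gap; a partition of unity then patches these local compact embeddings into a global one, yielding both the continuous and the compact embedding into $L^{q(\cdot)}(Q)$.

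The main obstacle I anticipate is precisely this patching step: converting the pointwise variable-exponent inequality $q(x)<p_{\infty}(x)$ into effective constant-exponent gaps without assuming any regularity (such as log-Hölder continuity) on $\vec{p}(\cdot)$ or $q(\cdot)$. The first two steps are essentially bookkeeping on top of Troisi's theorem, whereas this localization-and-patching argument is where the variable structure of the exponents genuinely interacts with the geometry of $Q$ and is therefore the technical heart of the proof.
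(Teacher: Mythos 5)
The paper does not actually prove this lemma: its entire ``proof'' is a citation of Theorem~2.5 in Fan's paper \cite{Fan} on anisotropic variable exponent Sobolev spaces. What you have written is, in outline, the standard argument behind that cited theorem (reduction to the constant-exponent Troisi embedding plus localization), so your route is genuinely different from the paper's in the sense that you attempt a self-contained proof where the authors delegate everything to the literature. Two remarks on your sketch. First, the obstacle you flag at the end is not really there: the hypothesis $\vec{p}(\cdot)\in(\mathcal{C}^{+}(\bar{Q}))^{N}$, $q(\cdot)\in\mathcal{C}^{+}(\bar{Q})$ is stated on the \emph{closure} $\bar{Q}$ and, in Fan's notation (from which this lemma is copied), means \emph{continuous} exponents with $1<p^{-}\le p^{+}<\infty$ --- despite the paper's own earlier definition of $\mathcal{C}_{+}(\Omega)$ requiring only measurability. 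Uniform continuity on the compact set $\bar{Q}$, together with the strict pointwise inequality $q(x)<p_{\infty}(x)$, is exactly what yields a finite cover by balls $B_j$ on which $\sup_{B_j}q$ is strictly below the constant-exponent target built from $\inf_{B_j}p_i$; no log-H\"older regularity is needed (log-H\"older enters elsewhere in the theory, e.g.\ for density of smooth functions, not here). One also has to dispose of the points where $\bar{p}(x)\ge N$, but there $p_{*}(x)=+\infty$ and the local gap is free. Second, once you localize, your global Step~1 (passing to the global infima $p_i^{-}$) becomes both unnecessary and counterproductive: the global infima may produce a target exponent $(\bar{p}^{\,-})^{*}$ far below $\inf_x p_{\infty}(x)$, so you should work directly with the \emph{local} infima $p_{i,j}^{-}=\inf_{B_j\cap Q}p_i$ on each patch, apply Troisi to $\varphi_j u$ extended by zero, and sum over the finite partition of unity. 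With those adjustments your argument closes; as written it is a correct strategy with the decisive step (the covering argument) asserted rather than carried out, which is acceptable here only because that step is standard once continuity of the exponents is recognized as part of the hypothesis.
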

\begin{proof}
The previous embedding theorem for the space $W_{0}^{1,\overrightarrow{p}(\cdot)}(\Omega)$ is proved in Theorem 2.5 of \cite{Fan}.
\end{proof}
\begin{lemma}\label{cv gr}
Let the assumptions (\ref{eqa1} )--(\ref{eqa3}) be satisfied in $Q$, and for some fixed $k > 0$ there hold
\begin{equation}
u^j\rightharpoonup u~~ \mbox{in}~~L^{\vec{p}(\cdot)}(Q), ~~j\rightarrow\infty,
\end{equation}
\begin{equation}
T_k(u^j)\rightarrow ~~T_k(u)~~\mbox{a.e. in}~~Q, ~~j\rightarrow\infty,
\end{equation}
\begin{equation}
\lim_{j\rightarrow\infty}\int_{Q}(a(x,T_k(u^j),\nabla u^j)-a(x,T_k(u^j),\nabla u)).(\nabla u^j-\nabla u)=0.
\end{equation}
Then along a subsequence,
\begin{equation}\label{a.e. in}
\nabla u^j\rightarrow ~~\nabla u~~\mbox{a.e. in}~~Q, ~~j\rightarrow\infty,
\end{equation}
and
\begin{equation}
\nabla u^j\rightarrow ~~\nabla u~~\mbox{strongly}~~L^{\vec{p}(\cdot)}(Q), ~~j\rightarrow\infty.
\end{equation}
\end{lemma}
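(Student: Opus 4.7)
\medskip
\noindent\textbf{Proof strategy.} The plan is to adapt the classical Boccardo--Murat argument for a.e.\ convergence of gradients to the anisotropic variable-exponent framework, and then upgrade to strong convergence by a Vitali argument. Set
\[
D_j(x) := \bigl(a(x,T_k(u^j),\nabla u^j)-a(x,T_k(u^j),\nabla u)\bigr)\cdot(\nabla u^j-\nabla u).
\]
The strict monotonicity built into (\ref{eqa3}) ensures $D_j\ge 0$ a.e., and the limit hypothesis of the lemma is $\int_Q D_j\to 0$. Thus $D_j\to 0$ in $L^1(Q)$ and, along a (not relabelled) subsequence, $D_j(x)\to 0$ for a.e.\ $x\in Q$; a further extraction makes $T_k(u^j(x))\to T_k(u(x))$ hold on the same full-measure set.

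Fix such a point $x_0$ where $\nabla u(x_0)$ exists and set $\xi_j:=\nabla u^j(x_0)$, $\xi:=\nabla u(x_0)$, $s_j:=T_k(u^j(x_0))\to s$. I first show that $(\xi_j)$ is bounded in $\RR^N$: otherwise the coercivity and growth of $a$ from (\ref{eqa2})--(\ref{eqa3}) force $D_j(x_0)\to+\infty$, contradicting $D_j(x_0)\to 0$. In the anisotropic setting this must be argued component by component since the exponents $p_i(x_0)$ differ, and this is the subtle step. For any cluster point $\xi^*$, Carath\'eodory continuity of $a$ in its last two arguments together with $\lim_j D_j(x_0)=0$ yields
\[
\bigl(a(x_0,s,\xi^*)-a(x_0,s,\xi)\bigr)\cdot(\xi^*-\xi)=0,
\]
and strict monotonicity forces $\xi^*=\xi$; hence $\xi_j\to\xi$, which is (\ref{a.e. in}).

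For the strong convergence in $L^{\vec{p}(\cdot)}(Q)$ I would invoke Vitali's theorem coordinatewise. Expanding
\[
a(x,T_k(u^j),\nabla u^j)\cdot\nabla u^j = D_j + a(x,T_k(u^j),\nabla u^j)\cdot\nabla u + a(x,T_k(u^j),\nabla u)\cdot(\nabla u^j-\nabla u),
\]
the coercivity bound $a(x,s,\xi)\cdot\xi\ge\alpha\sum_i|\xi_i|^{p_i(x)}$ from (\ref{eqa3}) controls the modular $\sum_i|\partial u^j/\partial x_i|^{p_i(x)}$ by terms whose equi-integrability follows from $\int_Q D_j\to 0$, the growth (\ref{eqa2}), H\"older's inequality in variable Lebesgue spaces, and the integrability of $\nabla u$. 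Combined with (\ref{a.e. in}), Vitali promotes this to strong convergence of each $\partial u^j/\partial x_i$ in $L^{p_i(\cdot)}(Q)$, hence of $\nabla u^j$ in $L^{\vec{p}(\cdot)}(Q)$ through the modular--norm equivalence.

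The main obstacle will be the componentwise boundedness step in the pointwise argument: the anisotropic structure prevents treating $|\xi_j|$ as a scalar, so one must exploit the growth and coercivity of each $a_i$ separately to rule out blow-up of individual components of $\xi_j$.
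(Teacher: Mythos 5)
Your proposal is correct and follows exactly the classical Leray--Lions/Boccardo--Murat scheme that the paper itself invokes only by citation (its ``proof'' is a one-line reference to \cite{K.K.M}, Assertion~2, and to Lemma~3.3 of \cite{Lions}): nonnegativity and $L^1$-convergence of the integrand, a.e.\ convergence along a subsequence, pointwise boundedness of $\nabla u^j$ via coercivity and growth, identification of cluster points by strict monotonicity, and a Vitali upgrade to strong convergence. The only slips are label mix-ups: strict monotonicity is (\ref{eqa2}) and the growth bound is (\ref{eqa1}), not (\ref{eqa3}) and (\ref{eqa2}) as you cite them, and the ``componentwise'' boundedness step you flag as an obstacle is in fact harmless because (\ref{eqa3}) controls the full anisotropic modular $\sum_i|\xi_i|^{p_i(x)}$ at once.
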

\begin{proof}
The convergence (\ref{a.e. in}) is established analogously as in the proof of \big[\cite{K.K.M}, Assertion 2\big].
Apparently, the first statement of this kind is Lemma 3.3 from the work \cite{Lions}.
\end{proof}
\begin{lemma}\label{cv faible}
Let $\Omega \subset \RR^{N}$ be an unbounded domain,  $(u_{n})_{n\in \NN}$ and $u$ be functions from $L^{\overrightarrow{p}(\cdot)}(\Omega)$, such that $(u_{n})_{n\in \NN}$ is bounded in $L^{\overrightarrow{p}(\cdot)}(\Omega)$ and \\
$u_{n}\rightarrow u \quad \mbox{a.e. in}\quad \Omega.$ Then
$$u_{n}\rightharpoonup u \quad \mbox{weakly in }\quad L^{\overrightarrow{p}(\cdot)}(\Omega).$$
\end{lemma}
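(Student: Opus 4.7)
The plan is to combine reflexivity of the anisotropic variable-exponent space with Mazur's lemma and a simple pointwise estimate for convex combinations. The unboundedness of $\Omega$ will play no role; the argument is essentially the classical one for reflexive Lebesgue spaces.

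First I would verify that $L^{\overrightarrow{p}(\cdot)}(\Omega)$ is reflexive. Since $\overrightarrow{p}(\cdot)\in(\mathcal{C}_{+}(\Omega))^{N+1}$ forces $1<p_i^{-}\le p_i^{+}<\infty$ for every $i$, each factor $L^{p_i(\cdot)}(\Omega)$ is reflexive, and reflexivity is preserved by finite products. Consequently, the Banach--Alaoglu theorem guarantees that every subsequence of the bounded sequence $(u_n)$ admits a further subsequence $(u_{n_k})$ converging weakly in $L^{\overrightarrow{p}(\cdot)}(\Omega)$ to some limit $v$.

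The heart of the argument is the identification $v=u$ almost everywhere. Applying Mazur's lemma to $(u_{n_k})$ produces convex combinations
\[
w_m=\sum_{k=m}^{K(m)}\lambda_{m,k}\,u_{n_k},\qquad \lambda_{m,k}\ge 0,\quad \sum_{k=m}^{K(m)}\lambda_{m,k}=1,
\]
converging strongly to $v$ in $L^{\overrightarrow{p}(\cdot)}(\Omega)$. Strong convergence yields, up to a further extraction, $w_{m_j}\to v$ pointwise a.e.\ in $\Omega$. On the other hand, at every point $x$ outside the null set on which $u_n(x)\to u(x)$, the elementary tail estimate
\[
|w_m(x)-u(x)|\le \sum_{k=m}^{K(m)}\lambda_{m,k}\,|u_{n_k}(x)-u(x)|\le \sup_{k\ge m}|u_{n_k}(x)-u(x)|\longrightarrow 0
\]
shows that $w_m(x)\to u(x)$ a.e.\ as well. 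Uniqueness of a.e.\ limits then forces $v=u$ almost everywhere. Because this identification is independent of the chosen initial subsequence, the Urysohn subsequence principle upgrades the conclusion to weak convergence of the full sequence $(u_n)$ to $u$ in $L^{\overrightarrow{p}(\cdot)}(\Omega)$.

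I do not expect a genuine obstacle: every step is either structural (reflexivity, Mazur, the subsequence principle) or purely pointwise (the tail estimate). The only mildly delicate point is that $L^{\overrightarrow{p}(\cdot)}(\Omega)$ is a product space, so strong convergence and a.e.\ convergence must be read componentwise, which is resolved by a standard diagonal extraction over the finitely many coordinates. No use is made of the boundedness of $\Omega$.
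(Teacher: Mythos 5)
Your proof is correct. Note, however, that the paper states Lemma \ref{cv faible} with no proof whatsoever (unlike Lemmas 1 and 2, which at least point to \cite{Fan} and \cite{K.K.M}), so there is no in-paper argument to compare yours against; what you have written is the standard justification, essentially the variable-exponent, product-space analogue of Lions' classical ``bounded $+$ a.e.\ convergence $\Rightarrow$ weak convergence'' lemma. Every ingredient holds as you claim: reflexivity of each $L^{p_i(\cdot)}(\Omega)$ needs only $1<p_i^-\le p_i^+<\infty$ and is insensitive to the unboundedness of $\Omega$; bounded sequences in a reflexive space are weakly sequentially compact (strictly this is Kakutani/Eberlein--\v{S}mulian rather than Banach--Alaoglu alone, a matter of attribution, not substance); the tail form of Mazur's lemma together with your pointwise estimate identifies the weak limit with $u$, and this identification is robust on infinite-measure domains precisely because it is pointwise (the alternative route via Egorov/Vitali on an exhaustion by bounded sets also works but is longer); and the Urysohn subsequence principle upgrades the conclusion to the full sequence. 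Two small points worth making explicit if you write this up: the passage from strong convergence of $w_m$ in $L^{\overrightarrow{p}(\cdot)}$ to a.e.\ convergence of a subsequence uses that $p_i^+<\infty$ (norm convergence gives modular convergence, hence local convergence in measure), and the componentwise reading of the product space requires only finitely many successive extractions, not a genuine diagonal argument.
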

\section{Assumptions and main result}
Consider $\Omega$ be an unbounded domain in $\RR^{N}$, $(N\geq 2)$,  $\displaystyle \overrightarrow{p}(\cdot)=\Big(p_{0}(\cdot),..., p_{N}(\cdot)\Big)$\\
$\in \Big(\mathcal{C}_{+}(\Omega)\Big)^{N+1}$, we will assume that
\begin{equation}\label{eq0}
\displaystyle p_{0}(x)\geq \underline{p}(x), \quad  x \in \Omega.
\end{equation}
Our aim in this work, is to prove an existence result for the following Dirichlet elliptic equations
\begin{equation}
    (\mathcal{P}) \left\{\begin{array}{ll}
-\mbox{div}(a(x,u,\nabla u)+ H(x,u,\nabla u)+ \mid u\mid^{p_0(x)-2}u= f \quad \mbox{in}\quad \Omega,\\
u=0 \quad \mbox{on} \quad \quad \partial \Omega.\\
    \end{array}%
    \right.
    \end{equation}
 Where $\displaystyle a: \Omega\times \RR\times\RR^{N}\to \RR^{N}$ with $a(x,s,\xi)=\Big(a_{1}(x,s,\xi),...,a_{N}(x,s,\xi)\Big)$, we assume that
 $a_{i}(x,s,\xi)$ for $i=1,...,N$ are Carath\'eodory functions, such that,
 for a.e. $x\in \Omega$ and for all $s\in \RR$, $\xi,\xi^{*}\in \RR^{N}$, $\xi\neq\xi^{*},$
 \begin{equation}\label{eqa1}
|a_i(x,s,\xi)|\leq  (\hat{a}_{i}(|s|)\Big((\sum_{i=1}^N|\xi_{i}|^{p_{i}(x)})^{\frac{1}{p'_{i}(x)}}+c_{i}(x)\Big),  \quad \mbox{with} \quad i=1,....N,
\end{equation}
\begin{equation}\label{eqa2}
\hspace{-4.5cm}(a_i(x,s,\xi)-a_i(x,s,\xi^{\ast})(\xi_i-\xi^{\ast}_i)>0,
\end{equation}
\begin{equation}\label{eqa3}
\hspace{-6cm}a_i(x,s,\xi).\xi_i\geq\alpha \sum_{i=1}^N|\xi_{i}|^{p_{i}(x)},
\end{equation}
with  $\displaystyle \hat{a}_{i}:\RR^{+}\to\RR^{+*}$. Furthermore, we assume the nonlinear term $\displaystyle H(x,s,\xi)$ is a Carath\'eodory function which satisfies only the growth condition:
\begin{equation}\label{eqh}
|H(x,s,\xi)|\leq \hat{h}(|s|)\sum_{i=1}^N|\xi_{i}|^{p'_{i}(x)} + h_{0}(x),\quad  \mbox{with}\quad  h_{0}\in L^{1}(\Omega),
\end{equation}
and $\displaystyle \hat{h}: \RR^+ \to \RR^{+*} $.
The source data $\displaystyle f \in L^{1}(\Omega)$. $T_k,$ $k>0,$ denotes the truncation function at level $k$ defined on $\RR$ by
 $\displaystyle T_k(r)=\max(-k,\min(k,r)).$\\
 We set:\\
$\mathcal{T}_{0}^{1,\overrightarrow{p}(\cdot)}(\Omega)=\{u:\Omega \to \RR,\mbox{measurable},\,T_{k}(u)\in W_{0}^{1,\overrightarrow{p}(\cdot)}(\Omega), \quad \forall k>0\}.$
\begin{definition}
A measurable function $u$ is said be an entropy solution for the problem $(\mathcal{P})$, if $\displaystyle u \in \mathcal{T}_{0}^{1,\overrightarrow{p}(\cdot)}(\Omega)$, such that
\begin{enumerate}
  \item $\displaystyle H(x,u, \nabla u)\in L^{1}(\Omega)$ and $\displaystyle \mid u\mid^{p_0(x)-2}u \in \L^1(\Omega),$
  \item for all $k>0$,\\
  $\displaystyle \int_{\Omega)}a(x,u,\nabla u)\nabla T_{k}(u-\xi)\,dx + \int_{\Omega}\Big(H(x,u,\nabla u)+|u|^{p_{0}(x)-2}u\Big)T_{k}(u-\xi)\,dx$\\
  $$\displaystyle \leq \int_{\Omega} f(x)T_{k}(u-\xi)\,dx, \quad \displaystyle \forall \xi \in \mathcal{C}_{0}^{1}(\Omega).$$.
\end{enumerate}
\end{definition}
The main result of the present work is the following theorem.
\begin{theorem}\label{TH1}
Let $\Omega \subset \RR^{N}$ be an unbounded domain, suppose that the assumptions (\ref{eq0}), (\ref{eqa1})--(\ref{eqh}) and $f\in L^{1}(\Omega)$ hold true. Then there exists at least one entropy solution of the problem $(\mathcal{P})$.
\end{theorem}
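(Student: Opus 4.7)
The plan is to prove Theorem \ref{TH1} by the standard approximation-compactness strategy adapted to the unbounded-domain anisotropic variable-exponent setting. I would exhaust $\Omega$ by the bounded subdomains $\Omega_n = \Omega \cap B_n(0)$, truncate the lower-order term to $H_n(x,s,\xi) = H(x,s,\xi)/(1+ \tfrac{1}{n}|H(x,s,\xi)|)$ so that $|H_n|\leq n$ and $|H_n|\leq |H|$, and approximate $f$ by a sequence $f_n \in L^\infty(\Omega_n)$ with $f_n \to f$ strongly in $L^1(\Omega)$. On each $\Omega_n$ the operator $u\mapsto -\mbox{div}(a(x,u,\nabla u)) + H_n(x,u,\nabla u) + |u|^{p_0(x)-2}u$ is bounded, coercive (by (\ref{eqa3}) together with the absorbing term $|u|^{p_0-2}u$) and pseudo-monotone, so classical surjectivity for pseudo-monotone operators in the reflexive Banach space $W_0^{1,\vec{p}(\cdot)}(\Omega_n)$ yields a weak solution $u_n$, which I then extend by zero to all of $\Omega$.

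The first core step is the a priori estimate. Because $H$ carries no sign condition, I would employ the exponential test function trick of Boccardo--Murat: test with $\varphi(T_k(u_n))$ where $\varphi(s) = s \exp(\lambda s^2)$ and $\lambda$ is chosen, depending on $\hat{h}(k)$ and $\alpha$, so that on $\{|u_n|\leq k\}$ a Young-type inequality (handling the $|\xi_i|^{p'_i(x)}$-growth of $H$ against the coercive $|\xi_i|^{p_i(x)}$) absorbs the bad contribution $\hat{h}(|u_n|)\sum_i |\nabla u_n|^{p'_i(x)}|\varphi(T_k(u_n))|$ into a fraction of $\alpha \varphi'(T_k(u_n))\sum_i|\nabla T_k(u_n)|^{p_i(x)}$. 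This gives that $T_k(u_n)$ is bounded in $W_0^{1,\vec{p}(\cdot)}(\Omega)$ uniformly in $n$, that $|u_n|^{p_0(x)-2}u_n$ is bounded in $L^1(\Omega)$, and a uniform tail estimate $\mathrm{meas}\{|u_n|>k\}\to 0$ as $k\to\infty$. Extracting a Cauchy-in-measure subsequence, one obtains $u_n\to u$ a.e.\ in $\Omega$ and $T_k(u_n)\rightharpoonup T_k(u)$ weakly in $W_0^{1,\vec{p}(\cdot)}(\Omega)$ for every $k>0$.

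The second core step is to verify on each bounded piece $\Omega_R = \Omega \cap B_R(0)$ the integral hypothesis of Lemma \ref{cv gr}. I would plug the test function $\eta_R\,\varphi\bigl(T_k(u_n)-T_k(u)\bigr)$ (with the same exponential weight and a smooth cut-off $\eta_R$) into the approximate equation and exploit the monotonicity (\ref{eqa2}) together with Lemma \ref{cv faible} to identify weak limits in $L^{\vec{p}(\cdot)}$; the $H_n$-term is absorbed by the same exponential device as above. Lemma \ref{cv gr} then delivers $\nabla T_k(u_n) \to \nabla T_k(u)$ a.e.\ in $\Omega$ and strongly in $L^{\vec{p}(\cdot)}(\Omega_R)$ for every $R$. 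Passing to the limit in the entropy inequality for an arbitrary $\xi \in \mathcal{C}_0^1(\Omega)$ (whose support lies in some $\Omega_R$) then follows by Vitali's theorem: the divergence integral by weak-strong convergence; the $H$-term by the equi-integrability coming from the uniform bound $|H(x,u_n,\nabla u_n)|\leq \hat{h}(k+\|\xi\|_\infty)\sum_i|\nabla T_{k+\|\xi\|_\infty}(u_n)|^{p'_i(x)}+h_0(x)$ on $\mathrm{supp}\,T_k(u_n-\xi)$; the absorbing term by Fatou combined with the $L^1$ bound on $|u_n|^{p_0-2}u_n$; and the right-hand side because $f_n\to f$ in $L^1$.

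The principal difficulty I anticipate lies in combining the unboundedness of $\Omega$ with the missing sign condition on $H$. All standard compactness tools (Rellich, Dunford--Pettis) have to be applied on the bounded pieces $\Omega_R$ and then patched together using Lemma \ref{cv faible}, while the anisotropic $p'_i(x)$-growth of $H$ (rather than the more customary $p_i(x)$-growth) forces the exponential test function machinery to be executed coordinate-by-coordinate via Young's inequality on the pair $(p_i(x),p'_i(x))$. Showing that $H(x,u,\nabla u)\in L^1(\Omega)$ in the limit will in particular require upgrading the local strong convergence of truncated gradients to global equi-integrability through the uniform tail estimate on $\{|u_n|>k\}$.
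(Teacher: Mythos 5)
Your overall strategy coincides with the paper's: approximate (you truncate the domain, whereas the paper truncates the data $f$ and $H$ while keeping the problem posed on the unbounded $\Omega$, but both are workable), derive a priori estimates with an exponential test function to compensate for the missing sign condition on $H$, obtain a.e.\ convergence of $\nabla T_k(u_n)$ through the monotonicity condition and Lemma \ref{cv gr} on the bounded pieces, establish equi-integrability of the lower-order terms via Vitali, and pass to the limit with a diagonal argument in $R$. There is, however, a concrete gap in your a priori estimate.

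The test function $\varphi(T_k(u_n))$ with $\varphi(s)=s\exp(\lambda s^2)$ places the exponential weight on the \emph{truncated} function. On the set $\{|u_n|>k\}$ this test function is the constant $\pm k e^{\lambda k^2}$, so its gradient vanishes there and the coercivity term yields no information on $\nabla u_n$ above the truncation level; yet the term $\int_{\{|u_n|>k\}} H_n(x,u_n,\nabla u_n)\,\varphi(T_k(u_n))\,dx$ survives and, by (\ref{eqh}), is only bounded by $k e^{\lambda k^2}\int_{\{|u_n|>k\}}\bigl(\hat h(|u_n|)\sum_i|\partial_{x_i}u_n|^{p'_i(x)}+h_0\bigr)dx$, which you have no means to control (no sign condition, no gradient estimate on that set). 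The estimate therefore does not close. The paper avoids this by taking the weight as a function of the \emph{full} solution, $v=T_k(u^m)\exp(A(|u^m|))$ with $A'=\hat h/\alpha$: differentiating the weight and pairing with $a$ via (\ref{eqa3}) produces the absorbing quantity $\hat h(|u^m|)\,|T_k(u^m)|\exp(A(|u^m|))\sum_i|\partial_{x_i}u^m|^{p_i(x)}$ on \emph{all} of $\Omega$, which dominates the $H$-contribution everywhere and not only on $\{|u_n|\le k\}$. The same correction is needed in your second step: the weight in $\eta_R\,\varphi(T_k(u_n)-T_k(u))$ must again be $\exp(A(|u_n|))$, and you are missing the additional cut-off $h_j(u_n)$ at a level $j>k$, which is what disposes of the energy term $\int_{\{j<|u_n|\le j+1\}}a\cdot\nabla u_n$ (the paper kills it with the extra test function $\exp(A(|u_m|))T_1(u_m-T_j(u_m))\eta_R$). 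Finally, absorbing $\hat h(|s|)\sum_i|\xi_i|^{p'_i(x)}$ into $\alpha\sum_i|\xi_i|^{p_i(x)}$ by Young's inequality, as you propose, only works when $p_i(x)\ge 2$; the paper's own computation silently replaces $p'_i(x)$ by $p_i(x)$ in (\ref{eqh}), so this point is delicate in either presentation, but your estimate should not rely on it without stating the restriction.
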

\begin{remark}
The Theorem above remains valid for $\xi \in W_{0}^{1,\overrightarrow{p}(\cdot)}(\Omega)\cap L^{\infty}(\Omega)$.
\end{remark}
\textbf{Proof of theorem \ref{TH1}}\\
Let $\displaystyle \Omega(n)=\{x\in \Omega: \quad |x|< n\}$, and $\displaystyle f^{n}(x)=\frac{f(x)}{1+\frac{|f(x)|}{n}}\chi_{\Omega(n)}$, we have
\begin{equation}
f^{n}\rightarrow f \quad \mbox{in}\quad L^{1}(\Omega), \quad n\rightarrow+\infty, \quad |f^{n}(x)|\leq |f|, \quad |f^{n}|\leq n\chi_{\Omega(n)}.
\end{equation}
$\displaystyle a^{n}(x,s,\xi)=\Big(a_{1}^{n}(x,s,\xi),...,a_{N}^{n}(x,s,\xi)\Big)$ where $\displaystyle a_{i}^{n}(x,s,\xi)=a_{i}(x,T_{n}(s),\xi)$, for $i=1,..,N$;\\
\begin{equation}\label{eqhn}
\displaystyle H^{n}(x,s,\xi)=T_{n}\Big(H(x,s,\xi)\Big)\chi_{\Omega(n)};\quad|H^{n}(x,s,\xi)|\leq H(x,s,\xi)\chi_{\Omega(n)}.
\end{equation}
Consider the following regularized equations:
\begin{equation}
    (\mathcal{P}_{n}) \left\{\begin{array}{l}
   \displaystyle \int_{\Omega}a(x,T_{n}(u^{n}),\nabla u^{n})\nabla v\,dx + \int_{\Omega}\Big(H(x,u^{n},\nabla u^{n})+|u^{n}|^{p_{0}(x)-2}u^{n}\Big).v \,dx\\
     \displaystyle = \int_{\Omega}f^{n}v \,dx\quad
   \mbox{for any} \quad  v \in  W_{0}^{1,\overrightarrow{p}(\cdot)}(\Omega).\\
    \end{array}%
    \right.
    \end{equation}
     The approximate problem ($\mathcal{P}_{n}$) admits a solution, this being based on the theorem of Lions \Big[\cite{Lions}Chapter II, 2, Theorem 2.7\Big], for pseudo-monotone operators. For the proof, see the Lemma \ref{l.app} in the Appendix.

  $\bf{Step~ 1: \mbox{A priori estimate of the sequence $\{u^m\}$}}.$ \\
  Let $ \displaystyle v=T_k(u^m)\exp(A(|u_m|))$ with $ \displaystyle A(s)=\int_0^s\frac{\hat{h}(r)}{\alpha}~dr.$ Since $v\in W_0^{1,\vec{p}}(\Omega)$, taking $v$ as the test function in the problem  $(\mathcal{P}_m)$, we get
 \begin{equation}\label{al1}
  \begin{aligned}
&\int_{\Omega}a^m(x,u^m,\nabla u^m)\nabla (T_k(u^m)\exp(A(|u_m|))) ~dx\\
&+\int_{\Omega}H^m(x,u^m,\nabla u^m) T_k(u^m)\exp(A(|u_m|))~ dx\\&+\int_{\Omega}|u^m|^{p_0(x)-2}u^m T_k(u^m)\exp(A(|u_m|))~dx\\&\leq\int_{\Omega}f_m(x)T_k(u^m)\exp(A(|u_m|)) ~dx.
 \end{aligned}
\end{equation}
 The first term in the left hand side can be written as
 \begin{equation}\label{al2}
  \begin{aligned}
&\int_{\Omega}a^m(x,u^m,\nabla u^m)\nabla (T_k(u^m)\exp(A(|u_m|))) ~dx\\
&=\int_{\Omega}a^m(x,u^m,\nabla u^m)\nabla T_k(u^m)\exp(A(|u_m|)) ~dx\\
&+\int_{\Omega}a^m(x,u^m,\nabla u^m)\nabla u^m|T_k(u^m)|\frac{\hat{h}(|u^m|)}{\alpha}\exp(A(|u_m|)) ~dx\\
&\geq\int_{\Omega}a^m(x,u^m,\nabla u^m)\nabla T_k(u^m)\exp(A(|u_m|)) ~dx\\&+\sum_{i=1}^N\int_{\Omega}|\partial_{x_i}u^m|^{p_i(x)}\hat{h}(|u^m|)|T_k(u^m)|\exp(A(|u_m|))~dx,
 \end{aligned}
 \end{equation}
 for the second term in the left hand side, by increasing condition for $H,$ we have
 \begin{equation}\label{al3}
  \begin{aligned}
\int_{\Omega}H^m(x,u^m,\nabla u^m) T_k(u^m)&\exp(A(|u_m|))~ dx\\ \leq& \sum_{i=1}^N\int_{\Omega}|\partial_{x_i}u^m|^{p_i(x)}\hat{h}(|u^m|)|T_k(u^m)|\exp(A(|u_m|))~dx\\&+
 \int_{\Omega}|h_0(x)||T_k(u^m)|\exp(A(|u_m|))~dx.
 \end{aligned}
 \end{equation}
 By combining (\ref{al1}), (\ref{al2}), and (\ref{al3}), we obtain
 \begin{equation}
  \begin{aligned}
 \sum_{i=1}^N\int_{\Omega}a_i(x,T_m(u^m),\nabla u^m)D^iT_k(u^m)&\exp(A(|u_m|)) ~dx\\&+\int_{\Omega}|u^m|^{p_0(x)-2}u^m T_k(u^m)\exp(A(|u_m|))~dx\\&\leq \int_{\Omega}|h_0(x)||T_k(u^m)|\exp(A(|u_m|))~dx\\&+
\int_{\Omega}f_m(x)T_k(u^m)\exp(A(|u_m|)) ~dx,
  \end{aligned}
 \end{equation}
which gives
\begin{equation}\label{es u^m}
\begin{aligned}
\sum_{i=1}^N\int_{\Omega}|D^iT_k(u^m)|^{p_i(x)}~dx&+\int_{\{|u^m|\leq k\}}|u^m|^{p_0(x)}\exp(A(|u_m|))~dx\\&+k\int_{\{|u^m|>k\}}|u^m|^{p_0(x)-1}\exp(A(|u_m|))~dx
\leq C_1k.
\end{aligned}
\end{equation}
$\bf{Step~ 2: \mbox{Almost everywhere convergence of sequence $\{u^m\}$}}$ \\
\begin{lemma}\label{lem1}
$\mbox{meas}\{x\in\Omega, ~~~~|u^m|>k\}$ tends to zeros as $k$ to infinity.
\end{lemma}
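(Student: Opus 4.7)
The plan is to read the desired decay of the level sets directly out of the a priori estimate (\ref{es u^m}), using only the third term on the left-hand side. Since $\exp(A(|u^m|))\geq 1$, estimate (\ref{es u^m}) gives, after dividing by $k$,
\[
\int_{\{|u^m|>k\}}|u^m|^{p_0(x)-1}\,dx\leq C_1,
\]
with $C_1$ independent of both $m$ and $k$. The key point is that this integral control already forces uniform smallness of the level sets via a Chebyshev-type argument; no Sobolev embedding is needed, which is fortunate because $\Omega$ is unbounded.

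To turn the integral bound into a measure bound, I would restrict to $k\geq 1$ and use the pointwise lower bound on the level set $\{|u^m|>k\}$:
\[
|u^m(x)|^{p_0(x)-1}\geq k^{p_0(x)-1}\geq k^{\underline{p}-1},
\]
where the first inequality uses $|u^m(x)|>k\geq 1$ and $p_0(x)-1>0$, and the second uses assumption (\ref{eq0}) together with $\underline{p}>1$ (which holds since every $p_i\in\mathcal{C}_+(\Omega)$). Integrating this pointwise bound over $\{|u^m|>k\}$ and combining with the previous integral estimate yields
\[
k^{\underline{p}-1}\,\mathrm{meas}\{x\in\Omega:\,|u^m(x)|>k\}\;\leq\;C_1,
\]
so that
\[
\mathrm{meas}\{x\in\Omega:\,|u^m(x)|>k\}\;\leq\;\frac{C_1}{k^{\underline{p}-1}}\longrightarrow 0\qquad\text{as }k\to\infty,
\]
uniformly in $m$.

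The main (and only) conceptual obstacle is to make sure the bound is uniform in $m$; this is automatic because the constant $C_1$ appearing in (\ref{es u^m}) comes from the $L^1$ datum $f$ and the function $h_0$, neither of which depend on $m$. Hence the estimate obtained above holds for all $m\in\mathbb{N}$ with the same constant, which is exactly what is needed when this lemma is invoked later to extract almost-everywhere convergent subsequences of $\{u^m\}$.
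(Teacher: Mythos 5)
Your proof is correct and follows essentially the same route as the paper: extract the bound $\int_{\{|u^m|>k\}}|u^m|^{p_0(x)-1}\,dx\leq C$ from the third term of (\ref{es u^m}) and apply a Chebyshev argument. You are in fact slightly more careful than the paper, which writes the lower bound as $k^{\bar{p}-1}$ where the correct (and sufficient, since $\underline{p}>1$) exponent is $\underline{p}-1$ as you have it.
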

\begin{proof}
By (\ref{es u^m}), we have
$$\int_{|u^m|> k}|u^m|^{p_0(x)-1}~dx\leq C_2,
$$
which gives $\displaystyle \mbox{meas}\{x\in\Omega, ~~~~|u^m|>k\}k^{\bar{p}-1}\leq C_2$, for $k>1,$\\
then, \,\,$\displaystyle \mbox{meas}\{x\in\Omega, ~~~~|u^m|>k\}\longrightarrow0$ as $k\rightarrow\infty.$
\end{proof}
Let $\displaystyle g(k)=\sup_{m\in \NN}\mbox{meas}\{x\in\Omega, ~~~~|u^m|>k\}\rightarrow0$ as $k\rightarrow\infty.$
Since $\Omega$ is an unbounded domain in $\RR^{N},$ we define $\eta_R$ defined as
$$\eta_R(r)=\left\{\begin{array}{ccc}
  1 & \mbox{if} & r<R, \\
  R+1-r & \mbox{if} & R\leq r<R+1, \\
  0 & \mbox{if} & r\geq R+1.
\end{array}\right.$$
For $R, h>0$, we have
\begin{align*}
\sum_{i=1}^N\int_{\Omega}D^i(\eta_R(|x|)T_h(u^m))~dx\leq& C_2\sum_{i=1}^N\int_{\Omega}D^i(u^m\eta_R(|x|))~dx\\+&C_3\sum_{i=1}^N\int_{\Omega}T_h(u^m)D^i(\eta_R(|x|)~dx\leq C(h,R),
\end{align*}
which implies that the sequence $\{\eta_R(|x|)T_h(u^m)\}$ is bounded in\\
 $W_0^{1,\vec{p}(\cdot)}(\Omega(R+1))$, and by embedding Theorem, and since $\eta_R=1$ in $\Omega(R)$\\
we have
\begin{align*}
&\eta_RT_h(u^m)\longrightarrow v_h ~~\mbox{strongly in}~~~~ L^{\vec{p}(\cdot)}(\Omega((R+1))),\\
& T_h(u^m)\longrightarrow v_h ~~\mbox{strongly in}~~~~ L^{\bar{p}(\cdot)}(\Omega((R))).
\end{align*}
By Egorov's theorem, we can choose $E_h\subset \Omega(R)$ such that $\mbox{meas}(E_h)<\frac{1}{h}$ and $T_h(u^m)\longrightarrow v_h $ uniformly in $\displaystyle \Omega(R)\setminus E_h.$\\
Let $\Omega^h(R)=\{x\in\Omega(R)\setminus E_h:~~|v_h(x)|\geq h-1\}.$ Since $T_h(u^m)$ converges uniformly to $v_h$ in $\Omega(R)\setminus E_h,$ there exists $m_0$ such that for any $m\geq m_0$, $|T_h(u^m)|\geq h-2$ on $\Omega^h(R)$, i.e. $|u^m|\geq h-2,$  then by Lemma \ref{lem1} we obtain\\  $\displaystyle \mbox{meas}~\Omega^h(R)\leq \sup_{m}\mbox{meas}\{x\in\Omega:~~|u^m|\geq h-2\}=g(h-2)\longrightarrow0$ as $h$ tends to zeros.\\
Now we set $\Omega_h(R)=\{x\in\Omega(R)\setminus E_h:~~|v_h(x)|< h-1\},$ and remark that $\Omega(R)=\Omega^h(R)\cup\Omega_h(R)\cup E_h$ by combining the last results we have $\mbox{meas}~\Omega_h(R)>\mbox{meas}~\Omega(R)-1/h-g(h-2).$ The uniform convergence of $T_h(u^m)$ implies that, there exists $m_0\in\NN$ such that for $m\geq m_0, |T_h(u^m)|<h$ on $\Omega_h(R),$ which gives $u^m\longrightarrow v_h$ on $\Omega_h(R)$, by classical argument we can prove that $v_h$ does not depend on $h$, and the convergence
$$u_m\rightarrow u ~~\mbox{a.e. in}~~ \Omega(R), ~~ m\rightarrow\infty,$$
holds true. Then, by the diagonalisation argument with respect to $R \in \NN$, we establish the almost everywhere convergence of $u^m$ to $u.$\\
$\bf{Step~ 3: \mbox{Weak convergence of the gradient}}$ \\
By (\ref{es u^m}), we have
\begin{equation}
\|T_k(u^m)\|_{W_0^{1,\vec{p}(\cdot)}(\Omega)}\leq C(k).
\end{equation}
So we can extract a weakly convergent subsequence in $W_0^{1,\vec{p}(\cdot)}(\Omega),$ such that
\begin{align*}
&T_k(u^m)\rightharpoonup v_k ~~\mbox{weakly in}~~~~W_0^{1,\vec{p}(\cdot)}(\Omega),\\
&\nabla T_k(u^m)\rightharpoonup \nabla v_h ~~\mbox{weakly in}~~~~ L^{\vec{p}(\cdot)}(\Omega),\\
&T_k(u^m)\rightarrow v_k~~\mbox{strongly in}~~~~ L^{\vec{p}_0(\cdot)}(\Omega).
\end{align*}
Beside, $T_k(u^m)\rightarrow T_k(u)~~\mbox{a.e. in}~~~~ \Omega,$ gives
$T_k(u^m)\rightarrow T_k(u)~~\mbox{strongly in}~~~~ L^{\vec{p}_0(\cdot)}(\Omega),$ and we conclude that, $\displaystyle \nabla T_k(u^m)\rightharpoonup \nabla T_k(u) ~~\mbox{weakly in}~~~~ L^{\vec{p}(\cdot)}(\Omega).$\\
$\bf{Step~ 4: \mbox{Strong convergence of the gradient}}$ \\
Show that $\nabla T_k(u^m)\rightarrow \nabla T_k(u) ~~\mbox{in}~~~~ L_{loc}^{\vec{p}(\cdot)}(\Omega).$\\
Let $j>k>0,$ and
$$h_j(s)=\left\{\begin{array}{ccc}
  1 & \mbox{if} & |s|\leq j, \\
  1-|s-j| & \mbox{if} & j\leq |s|\leq j+1, \\
  0 & \mbox{if} & r> j+1.
\end{array}\right.$$
Taking $v=\exp(A(|u^m|))(T_k(u^m)-T_k(u))h_j(u^m)\eta_R(|x|)$ as test function in the approximate problem $(\mathcal{P}_m),$ we have
\begin{equation}\label{cvf1}
\begin{aligned}
&\sum_{i=1}^N\int_{\Omega}a_i(x,T_m(u^m),\nabla u^m)D^i(\exp(A(|u^m|))(T_k(u^m)-T_k(u))h_j(u^m)\eta_R(|x|)) ~dx\\
&+\int_{\Omega}H^m(x,u^m,\nabla u^m) \exp(A(|u^m|))(T_k(u^m)-T_k(u))h_j(u^m)\eta_R(|x|)~ dx\\&+\int_{\Omega}|u^m|^{p_0(x)-2}u^m \exp(A(|u^m|))(T_k(u^m)-T_k(u))h_j(u^m)\eta_R(|x|)~dx\\&\leq\int_{\Omega}f_m(x)\exp(A(|u^m|))(T_k(u^m)-T_k(u))h_j(u^m)\eta_R(|x|) ~dx.
\end{aligned}
\end{equation}
Denoting $J_1, J_2, J_3,$ and $J_4$ by
\begin{align*}
&J_1=\sum_{i=1}^N\int_{\Omega}a_i(x,T_m(u^m),\nabla u^m)\times\\
&D^i(\exp(A(|u^m|))(T_k(u^m)-T_k(u))h_j(u^m)\eta_R(|x|)) ~dx,\\
&J_2=\int_{\Omega}H^m(x,u^m,\nabla u^m) \exp(A(|u^m|))(T_k(u^m)-T_k(u))h_j(u^m)\eta_R(|x|)~ dx,\\
&J_3=\int_{\Omega}|u^m|^{p_0(x)-2}u^m \exp(A(|u^m|))(T_k(u^m)-T_k(u))h_j(u^m)\eta_R(|x|)~dx,\\
&J_4=\int_{\Omega}f_m(x)\exp(A(|u^m|))(T_k(u^m)-T_k(u))h_j(u^m)\eta_R(|x|) ~dx.
\end{align*}
$J_1$ can be rewritten as
\begin{align*}
&J_1=\\
&\sum_{i=1}^N\int_{\Omega}a_i(x,T_m(u^m),\nabla u^m)\times\frac{\hat{h}(u^m)}{\alpha}\times\\
&D^iu^m sign(u^m)\exp(A(|u^m|))(T_k(u^m)-T_k(u))h_j(u^m)\eta_R(|x|) ~dx\\
&+\\
&\sum_{i=1}^N\int_{\Omega}a_i(x,T_m(u^m),\nabla u^m)\exp(A(|u^m|))(D^iT_k(u^m)-D^iT_k(u))h_j(u^m)\eta_R(|x|)dx\\
&+\\
&\sum_{i=1}^N\int_{j\leq|u^m|\leq j+1}a_i(x,T_m(u^m),\nabla u^m)D^iu^m\exp(A(|u^m|))(T_k(u^m)-T_k(u))\eta_R(|x|)\\
&+\\
&\sum_{i=1}^N\int_{\Omega}a_i(x,T_m(u^m),\nabla u^m)\exp(A(|u^m|))(T_k(u^m)-T_k(u))h_j(u^m)D^i\eta_R(|x|))dx\\
&
=J_1^1+J_1^2+J_1^3+J_1^4.
\end{align*}
Since $h_j\geq0$ and $\eta_R(|x|)\geq0$ and $u^m(T_k(u^m)-T_k(u))\geq0,$ then
\begin{equation}\label{J_1^1-J_1^2}
\begin{aligned}
J_1^1\geq&\sum_{i=1}^N\int_{\Omega}\hat{h}(|u^m|)|D^iu^m|^{p_i(x)}\exp(A(|u^m|))|T_k(u^m)-T_k(u)|\eta_R(|x|)dx.\\
J_1^2\geq&\sum_{i=1}^N\int_{|u^m|\leq k}a_i(x,T_m(u^m),\nabla u^m)\exp(A(|u^m|))|D^iT_k(u^m)-D^iT_k(u)|\eta_R(|x|)dx\\
-&\sum_{i=1}^N\int_{k<|u^m|\leq j+1}|a_i(x,T_m(u^m),\nabla u^m)|\exp(A(|u^m|))|D^iT_k(u)|\eta_R(|x|)dx.
\end{aligned}
\end{equation}
\begin{equation}\label{J_1^3-J_2}
\begin{aligned}
J_1^3\geq&\sum_{i=1}^N\int_{j<|u^m|\leq j+1}a_i(x,u^m,\nabla u^m)D^iu^m|T_k(u^m)-T_k(u)|\eta_R(|x|) ~dx.\hspace{1.7cm}\\
J_2\leq&\sum_{i=1}^N\int_{|u^m|\leq k}\hat{h}(|u^m|)|D^iu^m|^{p_i(x)}\exp(A(|u^m|))|T_k(u^m)-T_k(u)|\eta_R(|x|) ~dx\\
+&\sum_{i=1}^N\int_{\Omega}|h_0(x)||T_k(u^m)-T_k(u)|\eta_R(|x|)\exp(A(|u^m|)) ~dx.
\end{aligned}
\end{equation}
By (\ref{cvf1}), (\ref{J_1^1-J_1^2}), (\ref{J_1^3-J_2}), $J_3$, and $J_4$, we have
\begin{equation}\label{es}
\begin{aligned}
&\sum_{i=1}^N\int_{|u^m|\leq k}a_i(x,T_k(u^m),\nabla T_k(u^m))\exp(A(|u^m|))|D^iT_k(u^m)-D^iT_k(u)|\eta_R(|x|)dx\\
&+\sum_{i=1}^N\int_{\Omega}a_i(x,T_m(u^m),\nabla u^m)\exp(A(|u^m|))(T_k(u^m)-T_k(u))\eta_R'(|x|) \,dx\\
& +\delta\sum_{i=1}^N\int_{|u^m|\leq k}|T_k(u_m)|^{p_0(x)-2}T_k(u_m)\exp(A(|u^m|))(T_k(u^m)-T_k(u))\eta_R(|x|)dx\\
&\leq\int_{\Omega}(|h_0(x)|+|f^m|)\exp(A(|u^m|))|T_k(u^m)-T_k(u)|\eta_R(|x|)\,dx\\
& +\sum_{i=1}^N\int_{k<|u^m|\leq j+1}a_i(x,T_{j+1}(u^m),\nabla T_{j+1}(u^m))\exp(A(|u^m|))|D^iT_k(u)|\eta_R(|x|)dx\\
&+\sum_{i=1}^N\int_{j<|u^m|\leq j+1}a_i(x,T_m(u^m),\nabla u^m)\times\\
& \hspace{3cm} D^iu^m\exp(A(|u^m|))|T_k(u^m)-T_k(u)|\eta_R(|x|)\,dx\cdot
\end{aligned}
\end{equation}
The first term in the right hand side goes to zeros as $m$ tends to $\infty,$
since $T_k(u^m)\rightharpoonup T_k(u)$ weak * in $L^{\infty}(\Omega).$
Since $(a_i(x,T_{j+1}u^m,\nabla T_{j+1}u^m))_m$ is bounded in $L^{p_i'(\cdot)}(\Omega_R),$ there exists
 $\xi_i\in L^{p_i'(\cdot)}(\Omega_R)$, such that
$|a_i(x,T_{j+1}u^m,\nabla T_{j+1}u^m)|\rightharpoonup \xi_i ~~\mbox{in} ~~ L^{p_i'(\cdot)}(\Omega_R),$ the second term of the left hand side tends to zeros.\\
The third term in the left hand side go to zeros.\\
 Indeed, by taking $v=\exp(A(|u^m|))T_1(u^m-T_j(u^m))\eta_R(|x|)$ as test function in the approximate problem $(\mathcal{P}_m)$, we obtain
$$\sum_{i=1}^N\int_{j<|u^m|\leq j+1}a_i(x,u^m,\nabla u^m)D^iu^m\eta_R(|x|) ~dx=0,
$$
and since $T_k(u^m)\rightharpoonup T_k(u)$ weak * in $L^{\infty}(\Omega)$, we conclude the result.\\
Since $T_k(u^m)\rightarrow T_k(u)$ strongly in $L_{loc}^{p_i(\cdot)}(\Omega),$ the second term of the right hand side increased by a quantity that tends to $0$ as $m$ tends to zero.\\
Finally (\ref{es}), rewrite as
\begin{equation}
\begin{aligned}
&\sum_{i=1}^N\int_{|u^m|\leq k}a_i(x,T_k(u^m),\nabla T_k(u^m))\exp(A(|u^m|))|D^iT_k(u^m)-D^iT_k(u)|\eta_R(|x|)dx\\&
+\delta\sum_{i=1}^N\int_{|u^m|\leq k}|T_k(u_m)|^{p_0(x)-2}T_k(u_m)\exp(A(|u^m|))(T_k(u^m)-T_k(u))\eta_R(|x|) ~dx\\&
\leq \varepsilon(j,m).
\end{aligned}
\end{equation}
then,
\begin{equation}\label{1}
\begin{aligned}
&\sum_{i=1}^N\int_{\Omega}(a_i(x,T_k(u^m),\nabla T_k(u^m))-a_i(x,T_k(u^m),\nabla T_k(u)))\times\\
&\exp(A(|u^m|))(D^iT_k(u^m)-D^iT_k(u))\eta_R(|x|) ~dx\\&
\leq-\sum_{i=1}^N\int_{\Omega}a_i(x,T_k(u^m),\nabla T_k(u))\exp(A(|u^m|))|D^iT_k(u^m)-D^iT_k(u)|\eta_R(|x|) ~dx\\&
-\sum_{i=1}^N\int_{|u^m|\leq k}a_i(x,T_k(u^m),\nabla T_k(u^m))\exp(A(|u^m|))D^iT_k(u)\eta_R(|x|) ~dx\\&
+ \varepsilon(j,m).
\end{aligned}
\end{equation}
In view of Lebesgue dominated convergence theorem, we have $T_k(u^m)\rightarrow T_k(u)$ strongly in $L_{loc}^{p_0(\cdot)}(\Omega)$ and
$D^iT_k(u^m)\rightharpoonup D^iT_k(u)$ weakly in $L^{p_i(\cdot)}(\Omega)$, then the terms on the right hand side of (\ref{1}) go to zeros as $m$ and $j$ tend  to infinity,\\
which gives  $$\displaystyle \sum_{i=1}^N\int_{\Omega(R)}\Big( a_i(x,T_k(u^m),\nabla T_k(u^m))-a_i(x,T_k(u^m),\nabla T_k(u))\Big)\times$$
\begin{equation}\label{2}
\Big( D^iT_k(u^m)-D^iT_k(u)\Big)\,dx \to 0.
\end{equation}
Thanks to Lemma \ref{cv gr}, we conclude that
\begin{equation}\label{a.e.cvg}
\nabla T_k(u^m)\longrightarrow\nabla T_k(u)~~~\mbox{a.e. in}~~~\Omega(R).
\end{equation}
As before, applying Egorov's theorem, we can find a set $E_k\subset \Omega(R),$ such that $\mbox{meas}\, E_k<1/k$ and $ T_k(u^m)\longrightarrow T_k(u)$ uniformly in $\Omega(R)\setminus E_k.$\\
Recall that $\mbox{meas} \Omega_k(R)>\mbox{meas} \Omega(R)-1/k-g(k-2)$, with \\
\begin{center}
$\Omega_k(R)=\{x\in\Omega(R)\setminus E_k:~~|u(x)|<k-1\},$
\end{center}

which gives  $\displaystyle |T_k(u^m)|<k$, on $\Omega_k(R)$, for any $m\geq m_0,$ \\
then
\begin{center}
$\nabla u^m\longrightarrow\nabla u~~~\mbox{a.e. in}~~~\Omega_k(R).$
\end{center}
Thus by the diagonalisation argument with respect to $R$, we obtain
\begin{equation}\label{a.e.cvg1}
\nabla u^m\longrightarrow\nabla u~~~\mbox{a.e. in}~~~\Omega,~~~\mbox{and} ~~~
\nabla T_k(u^m)\longrightarrow\nabla T_k(u)~~~\mbox{a.e. in}~~~\Omega.
\end{equation}
Since $H^m(x,u^m,\nabla u^m)\longrightarrow H(x,u,\nabla u)$ a.e. in $\Omega$, by Fatou's Lemma,
 \begin{center}
 $H(x,u,\nabla u)\in  L^1(\Omega).$
\end{center}
$\bf{Step~ 5: \mbox{Equi integrability of} ~~|u^m|^{p_0(x)-2}u^m ~~\mbox{and}~~ H^m(x,u^m,\nabla u^m)}$ \\
In this section we will prove that $H^m(x,u^m,\nabla u^m)\longrightarrow H(x,u,\nabla u)$ \\
and $|u^m|^{p_0(x)-2}u^m\longrightarrow|u|^{p_0(x)-2}u$
strongly in $L_{loc}^1(\Omega).$\\
 We have $$H^m(x,u^m,\nabla u^m)\longrightarrow H(x,u,\nabla u)\quad \mbox{a.e. in} \quad\Omega$$
 and
 \begin{center}
 $|u^m|^{p_0(x)-2}u^m\longrightarrow|u|^{p_0(x)-2}u$ a.e. in $\Omega$.
\end{center}
In view of Vitali's Theorem, it's sufficient to prove that $H^m(x,u^m,\nabla u^m)$ and $|u^m|^{p_0(x)-2}u^m$ are uniformly equi-integrable.\\
Let $\displaystyle v=\exp(2A(|u^m|))T_1(u^m)-T_h(u^m))$, remark that $v\in W_0^{1,\vec{p}(\cdot)}(\Omega)\cap L^{\infty}(\Omega),$ so taking $v$ as test function in the approximate problem $(\mathcal{P}_m)$, we have
\begin{equation}
\begin{aligned}
&\sum_{i=1}^N\int_{\Omega}a_i(x,T_m(u^m),\nabla u^m)D^i(\exp(2A(|u^m|))T_1(u^m-T_h(u^m))) ~dx\\&+\int_{\Omega}H^m(x,u^m,\nabla u^m) \exp(2A(|u^m|))T_1(u^m-T_h(u^m))~ dx\\&+\int_{\Omega}|u^m|^{p_0(x)-2}u^m \exp(2A(|u^m|))T_1(u^m-T_h(u^m))~dx\\&\leq\int_{\Omega}f_m(x)\exp(2A(|u^m|))T_1(u^m-T_h(u^m)) ~dx.
\end{aligned}
\end{equation}
By coercivity of $a_i$ and increasing conditions of $H^m$, we obtain
\begin{equation}
\begin{aligned}
&\sum_{i=1}^N\int_{\Omega}a_i(x,T_m(u^m),\nabla u^m)D^iu^m\frac{\hat{h}(|u^m|)}{\alpha}\exp(A(|u^m|))|T_1(u^m-T_h(u^m))|~dx\\&+\int_{k\leq|u^m|\leq k+1}a_i(x,u^m,\nabla u^m) \exp(2A(|u^m|))D^iu^m~ dx\\&+\delta\int_{\Omega}|u^m|^{p_0(x)-2}u^m \exp(2A(|u^m|))T_1(u^m-T_h(u^m))~dx\\&\leq C_1\int_{\Omega}(|f_m(x)|+|c(x)|)|T_1(u^m-T_h(u^m))| ~dx,
\end{aligned}
\end{equation}
it follows that
\begin{equation}
\begin{aligned}
\sum_{i=1}^N\int_{|u^m|>h+1}\hat{h}(|u^m|)|D^iu^m|^{p_i(x)}~dx&+\delta\int_{|u^m|>h+1}|u^m|^{p_0(x)-1}~dx\\&\leq C_1\int_{|u^m|>h}(|f(x)|+|c(x)|) ~dx.
\end{aligned}
\end{equation}
Thus, for $\varepsilon>0,$ there exists $h(\varepsilon)>0$ such that $\forall h>h(\varepsilon)$
\begin{equation}
\sum_{i=1}^N\int_{|u^m|>h+1}\hat{h}(|u^m|)|D^iu^m|^{p_i(x)}~dx+\delta\int_{|u^m|>h+1}|u^m|^{p_0(x)-1}~dx\leq \varepsilon/2.
\end{equation}
Let $Q$ be an arbitrary bounded subset for $\Omega$.\\
Then, for any measurable set $E\subset Q$, we have
\begin{equation}
\begin{aligned}
&\sum_{i=1}^N\int_{E}\hat{h}(|u^m|)|D^iu^m|^{p_i(x)}~dx+\delta\int_{E}|u^m|^{p_0(x)-1}~dx\\&\leq \sum_{i=1}^N\int_{E}\hat{h}(|T_{h+1}u^m|)|D^iT_{h+1}u^m|^{p_i(x)}~dx+\delta\int_{E}|T_{h+1}(u^m)|^{p_0(x)-1}~dx\\&+
\sum_{i=1}^N\int_{|u^m|>h+1}\hat{h}(|u^m|)|D^iu^m|^{p_i(x)}~dx+\delta\int_{|u^m|>h+1}|u^m|^{p_0(x)-1}~dx.
\end{aligned}
\end{equation}
We conclude that for all $E\subset Q$ with $\mbox{meas}(E)<\beta(\varepsilon)$, and $T_h(u^m)\longrightarrow T_h(u)$ in $W_0^{1,\vec{p}(\cdot)}(\Omega_R),$
\begin{equation}
 \sum_{i=1}^N\int_{E}\hat{h}(|T_{h+1}(u^m)|D^iT_{h+1}u^m|^{p_i(x)}~dx+\delta\int_{E}|T_{h+1}(u^m)|^{p_0(x)-1}~dx\leq \varepsilon/2.
\end{equation}
Finally, combining the last formulas, $\forall E\subset Q~~\mbox{such that}~~\mbox{meas}(E)\leq \beta(\varepsilon)$ we obtain
\begin{equation}
 \sum_{i=1}^N\int_{E}\hat{h}(|u^m||D^iu^m|^{p_i(x)}~dx+\delta\int_{E}|u^m|^{p_0(x)-1}~dx\leq \varepsilon,
\end{equation}
which gives the results.\\
$\bf{Step~5: \mbox{Passage to the limit}}$\\
Let $\displaystyle v=\psi_l T_k(u^m-\varphi)$, $\psi_l\in \mathcal{D}(\Omega)$ such that
$$\psi_l(x)=\left\{\begin{array}{ccc}
  1& \mbox{if}  & x\in\Omega(l) \\
  0 & \mbox{if} & x\in\Omega\setminus\Omega(l+1),
\end{array}\right.$$
 and taking $\varphi\in W_0^{1,\vec{p}(\cdot)}(\Omega)\cap L^{\infty}(\Omega)$ as test function in the approximate problem, we obtain
 \begin{equation}\label{p.l}
\begin{aligned}
  \sum_{i=1}^N&\int_{\Omega(l+1)}a_i(x,T_m(u^m),\nabla u^m)\psi_lD^iT_k(u^m-\varphi)~dx\\&+\sum_{i=1}^N\int_{\Omega(l+1)\setminus\Omega(l)}a_i(x,T_m(u^m),\nabla u^m)D^i\psi_lT_k(u^m-\varphi)~dx\\
  &+\int_{\Omega(l+1)}H^m(x,u^m,\nabla u^m)\psi_lT_k(u^m-\varphi)~dx\\
&+\int_{\Omega(l+1)}|u^m|^{p_0(x)-2}u^m\psi_lT_k(u^m-\varphi)~dx\\&=\int_{\Omega(l+1)}f_m\psi_lT_k(u^m-\varphi)~dx.
\end{aligned}
 \end{equation}
Let $\displaystyle M=k+\|\xi\|_{\infty}.$ If $\displaystyle |u^m|\geq M$, then $\displaystyle |u^m-\xi|\geq |u^m|-\|\xi\|_{\infty}\geq k.$ Therefore $\displaystyle \{|u^m-\xi|<k\}\subseteq \{|u^m|<M\}$, and hence
\begin{align*}
I_l^m&=\int_{\Omega}a_i(x,T_m(u^m),\nabla u^m)\psi_lD^iT_k(u^m-\varphi)~dx
\\&=\int_{\Omega}a_i(x,T_M(u^m),\nabla T_M(u^m))\psi_lD^iT_k(u^m-\varphi)~dx\\&=\int_{\Omega}a_i(x,T_M(u^m),\nabla T_M(u^m))\psi_l(D^iT_M(u^m)-D^i\varphi)\chi_{\{|u^m-\xi|<k\}}~dx,~~ m\geq M.
\end{align*}
Let $\displaystyle w^m = u^m-\varphi, w=u-\varphi$. We have
\begin{equation}
D^iT_k(w^m)-D^iT_k(w)=(D^iw^m-D^iw)\chi_{\{|y^m|<k\}}+D^iw(\chi_{\{|w^m|<k\}}-\chi_{\{|w|<k\}})\rightarrow0,
\end{equation}
 $\mbox{a.e. in}~ \Omega, m\rightarrow\infty$.
Using Young inequality  and the assumptions (\ref{eqa1}), (\ref{eqa3}), we deduce for any $\varepsilon \in  (0,1)$ that
$$a_i(x,T_M(u^m),\nabla T_M(u^m))(D^iT_M(u^m)-D^i\varphi)\chi_{\{|u^m-\xi|<k\}}\geq -c_1(|D^i\xi_i|^p_i(x)).$$
Since $-c_1(|D^i\xi_i|^p_i(x))\in L^1(\Omega)$ by Fatou's lemma we have
\begin{equation}
\begin{aligned}
\lim_{m\rightarrow\infty} \inf I_l^m\geq&\int_{\Omega}a_i(x,T_M(u),\nabla u)\psi_lD^iT_k(u-\varphi)~dx\\=&\int_{\Omega}a_i(x,u,\nabla u)\psi_l D^iT_k(u-\varphi)~dx.
\end{aligned}
\end{equation}
and
\begin{equation}
\psi_lT_k(u^m-\varphi)\rightharpoonup \psi_lT_k(u^m-\varphi)~~ \mbox{weakly * in}~~ L^{\infty}(\Omega),~~ m\rightarrow\infty.
\end{equation}
By equi-integrability of $H^m$ and $|u^m|^{p_0(x)-2}u^m$, passing to the limit on $m$ in (\ref{p.l}) we obtain
\begin{equation}
\begin{aligned}
  &\sum_{i=1}^N\int_{\Omega(l+1)}a_i(x,u,\nabla u)\psi_lD^iT_k(u-\varphi)~dx\\&+\sum_{i=1}^N\int_{\Omega(l+1)\setminus\Omega(l)}a_i(x,u,\nabla u)D^i\psi_lT_k(u-\varphi)~dx\\
  &+\int_{\Omega(l+1)}H(x,u,\nabla u)\psi_lT_k(u-\varphi)~dx
+\int_{\Omega(l+1)}|u|^{p_0(x)-2}u\psi_lT_k(u-\varphi)~dx\\&\leq\int_{\Omega(l+1)}f\psi_lT_k(u-\varphi)~dx.
\end{aligned}
 \end{equation}
Now passing to the limit to infinity in $l$ we obtain the existence of entropy solution for the problem.
\section{Appendix}
\begin{lemma}\label{l.app}
Let $\Omega \subset \RR^{N}$ be an unbounded domain, suppose that the assumptions (\ref{eq0}), (\ref{eqa1})--(\ref{eqh}), there exists at least one weak solution of the problem $(\mathcal{P}_{n})$.
\end{lemma}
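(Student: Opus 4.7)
The plan is to cast $(\mathcal{P}_{n})$ as an abstract operator equation $\mathcal{A}_n u = F_n$ on the reflexive Banach space $V = W_0^{1,\overrightarrow{p}(\cdot)}(\Omega)$ and then invoke the Lions theorem on pseudo-monotone operators (\cite{Lions}, Chapter II, Theorem 2.7). Define
$$\langle \mathcal{A}_n u, v\rangle = \int_\Omega a(x,T_n(u),\nabla u)\cdot\nabla v\,dx + \int_\Omega H^n(x,u,\nabla u)\,v\,dx + \int_\Omega |u|^{p_0(x)-2}u\,v\,dx,$$
and $\langle F_n, v\rangle = \int_\Omega f^n v\,dx$. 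Since $f^n$ and the truncation $H^n$ are both bounded by $n$ with support in $\Omega(n)$, while the Nemytskii operator $u\mapsto |u|^{p_0(x)-2}u$ maps $L^{p_0(\cdot)}(\Omega)$ continuously into $L^{p_0'(\cdot)}(\Omega)$, both $\mathcal{A}_n$ and $F_n$ are well-defined on $V$. The work is then to verify boundedness, coercivity, and pseudo-monotonicity of $\mathcal{A}_n$.

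Boundedness follows from (\ref{eqa1}) together with $|T_n(u)|\leq n$, which forces $\hat{a}_i(|T_n(u)|)\leq C(n)$, so the Leray-Lions piece sends bounded sets of $V$ to bounded sets of $V^*$; $H^n\in L^\infty(\Omega(n))$ gives a trivially bounded contribution. Coercivity rests on (\ref{eqa3}), which yields $\int a(x,T_n(u),\nabla u)\cdot\nabla u\,dx\geq \alpha\sum_i\int|\partial_{x_i}u|^{p_i(x)}\,dx$, and on the non-negativity of $\int|u|^{p_0(x)}\,dx$; the two bounded contributions $|\int H^n u\,dx|$ and $|\int f^n u\,dx|$, each of order $n\|u\|_{L^1(\Omega(n))}$, are absorbed into the leading coercive sum via Young's inequality.

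The main obstacle is pseudo-monotonicity, which is delicate because $\Omega$ is unbounded and $V\hookrightarrow L^{p_0(\cdot)}(\Omega)$ fails to be compact globally. Given $u_k\rightharpoonup u$ in $V$ with $\limsup_k\langle \mathcal{A}_n u_k, u_k - u\rangle\leq 0$, the strategy is to argue locally: on each bounded $\Omega(R)$ the compact embedding of Section~2.1 gives $u_k\to u$ strongly in $L^{p_0(\cdot)}(\Omega(R))$, and a diagonal extraction in $R$ yields $u_k\to u$ a.e.\ on $\Omega$. Local strong convergence immediately handles $\int f^n(u_k - u)\,dx\to 0$; the truncation bound $|H^n|\leq n\chi_{\Omega(n)}$ together with equi-integrability on $\Omega(n)$ does the same for the $H^n$ term; and the monotonicity inequality $\int(|u_k|^{p_0-2}u_k - |u|^{p_0-2}u)(u_k - u)\,dx\geq 0$ combined with weak convergence disposes of the $|u|^{p_0-2}u$ term. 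One is then reduced to
$$\limsup_{k\to\infty}\int_\Omega\bigl(a(x,T_n(u_k),\nabla u_k) - a(x,T_n(u_k),\nabla u)\bigr)\cdot(\nabla u_k - \nabla u)\,dx \leq 0,$$
which by strict monotonicity (\ref{eqa2}) forces equality to zero. Lemma~\ref{cv gr} then delivers $\nabla u_k\to\nabla u$ a.e.\ on $\Omega$, and Vitali's theorem upgrades all the residual convergences to $\liminf_k\langle \mathcal{A}_n u_k, u_k - v\rangle\geq \langle \mathcal{A}_n u, u - v\rangle$ for every $v\in V$, which is the pseudo-monotonicity. With these three properties established, Lions' theorem produces $u^n\in V$ satisfying $\mathcal{A}_n u^n = F_n$, i.e.\ a weak solution of $(\mathcal{P}_n)$.
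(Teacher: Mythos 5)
Your proposal is correct and follows essentially the same route as the paper: both cast $(\mathcal{P}_n)$ as an operator equation for $A^n$ on $W_0^{1,\overrightarrow{p}(\cdot)}(\Omega)$, verify boundedness via (\ref{eqa1}) and the truncations, coercivity via (\ref{eqa3}) with the lower-order terms absorbed, and pseudo-monotonicity via local compact embeddings on $\Omega(R)$ plus diagonalization for a.e.\ convergence, the monotonicity trick combined with Lemma~\ref{cv gr} for the gradients, and finally Lions' theorem. The only difference is cosmetic: you use the $\limsup\langle A u_k, u_k-u\rangle\leq 0$ formulation of pseudo-monotonicity, while the paper works with the equivalent three-condition version (\ref{cv1})--(\ref{cv3}).
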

\begin{proof}
Let $\Omega \subset \RR^{N}$ be an unbounded domain, for all $\displaystyle \forall u,v \in W_{0}^{1,\overrightarrow{p}(\cdot)}(\Omega)$, we denote by $A^{n}$ the operator defined from $\displaystyle W_{0}^{1,\overrightarrow{p}(\cdot)}(\Omega)$ into it's dual by:\\
$\displaystyle \langle A^{n}(u),v\rangle = \int_{\Omega}a(x,T_{n}(u),\nabla u)\nabla v\,dx + \int_{\Omega}\Big(H^{n}(x,u,\nabla u)+|u|^{p_{0}(x)-2}u\Big).v \,dx $\\
1)- We show that $A^{n}$ is bounded in $W_{0}^{1,\overrightarrow{p}(\cdot)}(\Omega)$:\\
Using (\ref{eqa1}) and the fact that $\displaystyle ||u||_{p'(\cdot)}\leq \bigg(\int_{\Omega}|u|^{p'(\cdot)}+1 \bigg)^{\frac{1}{p^{'-}}}$ we obtain the estimate
\begin{equation}\label{est1.1}
\begin{array}{lll}
\displaystyle ||a(x,T_{n}(u),\nabla u)||_{\overrightarrow{p'}(\cdot)} & = &\displaystyle \sum_{i=1}^{N}||a_{i}(x,T_{n}(u),\nabla u)||_{p'(\cdot)},\\
 &\leq& \displaystyle \sum_{i=1}^N\Big(\int_{\Omega}|a_{i}(x,T_{n}(u),\nabla u)|^{p'(x)}\,dx +1\Big)^{\frac{1}{p^{'-}}},\\
& \leq & \displaystyle \sum_{i=1}^N\Big(\hat{a}_{i}(n)\int_{\Omega}\sum_{i=1}^N|\frac{\partial u}{\partial x_{i}}|^{p_{i}(x)}+1\Big)^{\frac{1}{p^{'-}}},\\
& \leq & C_{1}(n,||\nabla u||_{\overrightarrow{p}(\cdot)}).\\
    \end{array}%
 \end{equation}
Likewise, (\ref{eqhn}) yields
\begin{equation}\label{est2}
\displaystyle ||H^{n}(x,u,\nabla u)||_{p'_{0}(\cdot)}\leq C_{2}(n)\quad \mbox{and}\quad  \displaystyle |||u|^{p_{0}(x)-2}u||_{p'_{0}(\cdot)}\leq C_{3}.
\end{equation}
The above estimates (\ref{est1.1}), (\ref{est2}) yield, for any $\displaystyle v \in W_{0}^{1,\overrightarrow{p}(\cdot)}(\Omega)$:\\
$$\displaystyle \langle A^{n}(u),v\rangle \leq C_{1}||\nabla v||_{\overrightarrow{p}(\cdot)}+2\Big(C_{2}(n) +C_{3}+||f^{n}||_{p'(\cdot)}\Big)(|| v||_{\overrightarrow{p}(\cdot)}+||\nabla v||_{p_{0}(\cdot)})$$
which gives the bounded of the operator $A^{n}$.\\
2)- $A^{n}$ is coercive:\\
In view of H\"{o}lder's type inequality, we have for all $u \in W_{0}^{1,\overrightarrow{p}(\cdot)}(\Omega).$
\begin{equation}\label{H.es}
\begin{aligned}
&\bigg|\int_{\Omega}H^{n}(x,u,\nabla u)u~dx\bigg|\\
&\leq\bigg(1/p_0^-+1/(p_0')^-\bigg)\bigg(\int_{\Omega}|H^{n}(x,u,\nabla u)|^{p_0'(x)}~dx+1\bigg)^{1/(p_0')^-}\|u\|_{p_0(\cdot)}\\
&\leq 2\bigg(n^{(p_0')^+}|\Omega(n)|+1\bigg)^{1/(p_0')^-}\|u\|_{1,\overrightarrow{p}(\cdot)}\\
&\leq C_n\|u\|_{1,\overrightarrow{p}(\cdot)}.
\end{aligned}
\end{equation}
Indeed, by means of (\ref{eqa3}), (\ref{eqh}), (\ref{H.es}) and  generalized Young inequality, we deduce that
\begin{equation}\label{est1}
\begin{array}{lll}
\displaystyle && \langle A^{n}(u),v\rangle\\& = & \displaystyle \int_{\Omega}a(x,T_{n}(u),\nabla u)\nabla u\,dx + \int_{\Omega}\Big(H^{n}(x,u,\nabla u)+|u|^{p_{0}(x)-2}u+ f^{n}\Big).u \,dx \\
&\geq &  \displaystyle \alpha \sum_{i=1}^N\int_{\Omega} |\frac{\partial u}{\partial x_{i}}|^{p_{i}(x)}\,dx -C_n||u||_{1,\overrightarrow{p}(\cdot)}+(1-\epsilon)\int_{\Omega}|u|^{p_{0}(x)}\,dx\\ &-& \displaystyle C_{\epsilon}\int_{\Omega}|f^{n}|^{p'_{0}(x)}\,dx\\
&\geq &\displaystyle  \min(\alpha, (1-\epsilon))\bigg(\sum_{i=1}^N ||\frac{\partial u}{\partial x_{i}}||_{p_{i}(\cdot)}^{p_{i}^{-}} +
||u||_{p_{0}(\cdot)}^{p_{0}^{-}}\bigg)-C_n||u||_{1,\overrightarrow{p}(\cdot)} -C'.\\
\end{array}%
 \end{equation}
In view of (\ref{eq0}) we have
$$\displaystyle \frac{\langle A^{n}(u),v\rangle}{||u||_{1,\overrightarrow{p}(\cdot)}} \geq \min(\alpha, (1-\epsilon))\bigg(\sum_{i=1}^N ||\frac{\partial u}{\partial x_{i}}||_{p_{i}(\cdot)} +||u||_{p_{0}(\cdot)}\bigg)^{\underline{p}-1}-C_n -\frac{C'}{||u||_{1,\overrightarrow{p}(\cdot)}}.$$
Thus, $\displaystyle \frac{\langle A^{n}(u),v\rangle}{||u||_{1,\overrightarrow{p}(\cdot)}}\to +\infty$ when $\displaystyle ||u||_{1,\overrightarrow{p}(\cdot)}\to +\infty.$\\
3)- We show that $A^{n}$ is pseudo-monotone operator:\\
Let us now prove that if
\begin{equation}\label{cv1}
u^j\rightharpoonup u ~~~~\mbox{in}~~W_0^{1,\vec{p}(\cdot)}(\Omega),
\end{equation}
\begin{equation}\label{cv2}
A^m(u^j)\rightharpoonup w ~~~~\mbox{in}~~W_0^{-1,\vec{p'}(\cdot)}(\Omega),
\end{equation}
\begin{equation}\label{cv3}
\big \langle A^m(u^j),u^j\big\rangle\leq \big \langle w,u\big\rangle,
\end{equation}
then
\begin{equation}
A^m(u)=w
\end{equation}
\begin{equation}
\lim_{j\rightarrow\infty}\big\langle A^m(u^j),u^j\big\rangle=\big \langle A^m(u),u\big\rangle.
\end{equation}
The convergence (\ref{cv1}) yields the estimate
\begin{equation}\label{bor u^j}
\|u^j\|_{W_0^{1,\vec{p}(\cdot)}(\Omega)}\leq C_1, ~~~~j\in\NN.
\end{equation}
Let us show the following convergence along a subsequence:
\begin{equation}\label{a.e.cv u_m}
u^j\rightarrow u ~~\mbox{a.e. in}~~\Omega,~~j\rightarrow\infty.
\end{equation}
For $R, h>0$, we have
\begin{align*}
\sum_{i=1}^N\int_{\Omega}D^i(\eta_R(|x|)T_h(u^m))~dx\leq& C_2\sum_{i=1}^N\int_{|u_m|\leq h}|D^iu^m|^{p_i(x)}~dx\\+&C_3\sum_{i=1}^N\int_{\Omega}|D^i(\eta_R(|x|)|^{p_i(x)}~dx\leq C(h,R),
\end{align*}
which implies that the sequence $\{\eta_R(|x|)T_h(u^m)\}$ is bounded in $W_0^{1,\vec{p}(\cdot)}(\Omega(R+1))$, and by embedding theorem, and since $\eta_R=1$ in $\Omega(R)$ we have
\begin{equation}\label{cv.f. u_n}
\begin{aligned}
&\eta_RT_h(u^m)\longrightarrow v_h ~~\mbox{in}~~~~ L^{\vec{p}(\cdot)}(\Omega((R+1))),\\
&T_h(u^m)\longrightarrow v_h ~~\mbox{in}~~~~ L^{\bar{p}(\cdot)}(\Omega((R))).
\end{aligned}
\end{equation}
\indent By Egorov's Theorem, we can choose $E_h\subset \Omega(R)$ such that $\displaystyle \mbox{meas}(E_h)<1/h$ \\
and   $\displaystyle \displaystyle T_h(u^m)\longrightarrow v_h $ uniformly in $ \Omega(R)\setminus E_h.$\\
Let $\Omega^h(R)=\{x\in\Omega(R)\setminus E_h:~~|v_h(x)|\geq h-1\}.$ Since $T_h(u^m)$ converges uniformly to $v_h$ in $\Omega(R)\setminus E_h,$ there exists $m_0$ such that for any $m\geq m_0, |T_h(u^m)|\geq h-2$ on $\Omega^h(R)$, i.e. $|u^m|\geq h-2,$  then by Lemma \ref{lem1} we obtain
  $$\mbox{meas}~\Omega^h(R)\leq \sup_{m}\mbox{meas}\{x\in\Omega:~~|u^m|\geq h-2\}=g(h-2)\longrightarrow0$$
   as $h$ tends to zeros.\\
\indent Now we set $\Omega_h(R)=\{x\in\Omega(R)\setminus E_h:~~|v_h(x)|< h-1\},$ and remark that\\
 $\Omega(R)=\Omega^h(R)\cup\Omega_h(R)\cup E_h$ by combining the last results we have\\
  $\displaystyle \mbox{meas}~\Omega_h(R)>\mbox{meas}~\Omega(R)-1/h-g(h-2).$ The uniform convergence of $T_h(u^m)$ implies that, there exists $\displaystyle m_0\in\NN$ such that for $\displaystyle m\geq m_0, |T_h(u^m)|<h$ on $\Omega_h(R),$ which gives $\displaystyle u^m\longrightarrow v_h$ on $\displaystyle \Omega_h(R)$, by classical argument we can prove that $v_h$ not depend on $h$, and the convergence
\begin{equation}\label{cv.p.p}
u^m\rightarrow u ~~\mbox{a.e. in}~~ \Omega(R), ~~ m\rightarrow\infty,
\end{equation}
by (\ref{cv.f. u_n}), we get
\begin{equation}\label{cv.f. u_n1}
u^m\longrightarrow u ~~\mbox{in}~~~~ L^{\bar{p}(\cdot)}(\Omega((R))),
\end{equation}
holds true. Then, by the diagonalisation argument with respect to $R \in \NN$, we obtain the result (\ref{a.e.cv u_m}).\\
From (\ref{bor u^j}), (\ref{est1}) we have the estimate
\begin{equation}
\|a(x,T_m(u^j),\nabla u^j)\|_{\vec{p'}(\cdot)}\leq C_2(m), ~~j\in \NN.
\end{equation}
Therefore, there exist functions $\tilde{a}^m\in L^{\vec{p'}(\cdot)}(\Omega)$ such that
\begin{equation}\label{cv fa}
a(x,T_m(u^j),\nabla u^j)\rightharpoonup \tilde{a}^m~~\mbox{in}~~L^{\vec{p'}(\cdot)}(\Omega), ~~j\rightarrow\infty.
\end{equation}
The estimate (\ref{est2}) implies the existence of function $\tilde{b}^m\in L^{p_0'(\cdot)}(\Omega)$ such that
\begin{equation}\label{cv fb}
H^m(x,u^j,\nabla u^j)\rightharpoonup \tilde{H}^m~~\mbox{in}~~L^{p_0'(\cdot)}(\Omega), ~~j\rightarrow\infty.
\end{equation}
Then, the estimates (\ref{est2}), (\ref{bor u^j}) yield
\begin{equation}
\||u^j|^{p_0(x)-2}u^j\|_{p_0'(\cdot)}\leq C_3, ~~j\in\NN.
\end{equation}
Thus, in view of the convergence (\ref{a.e.cv u_m}), we obtain from Lemma 4.3 that
\begin{equation}\label{cv fi}
|u^j|^{p_0(x)-2}u^j\rightharpoonup |u|^{p_0(x)-2}u ~~~\mbox{in}~~ L^{\vec{p'}(\cdot)}(\Omega), ~~j\rightarrow\infty.
\end{equation}
By (\ref{cv2}), and (\ref{cv fa})-(\ref{cv fi}), for any $v\in W_0^{1,\vec{p}(\cdot)}(\Omega)$ we deduce that
\begin{equation}\label{ps}
\begin{array}{cc}
&\big<w,v\big>=\lim_{j\rightarrow\infty}\big<A^m(u^j),v\big>=\lim_{j\rightarrow\infty}\big<a(x,T_m(u^j),\nabla u^j).\nabla v\big>+\\\\
&\lim_{j\rightarrow\infty}\big<(b^m(x,u^j,\nabla u^j)+|u^j|^{p_0(x)-2}u^j-f_m)v\big>=\\\\
&\big<\tilde{a}^m.\nabla v\big>+\big<\tilde{H}^m+|u|^{p_0(x)-2}u-f_m)v\big>.
\end{array}
\end{equation}
Evidently, the following equality is satisfied:
\begin{equation}
\begin{array}{cc}
&\big<A^m(u^j),u^j\big>=\big<a(x,T_m(u^j),\nabla u^j).\nabla u^j\big>+\\\\
&\big<(H^m(x,u^j,\nabla u^j)+|u^j|^{p_0(x)-2}u^j-f_m)u^j\big>
\end{array}
\end{equation}
(\ref{cv3}) and (\ref{ps}) give
\begin{equation}
\lim_{j\rightarrow\infty}\big<A^m(u^j),u^j\big>\leq\big<\tilde{a}^m.\nabla u\big>+\big<\tilde{H}^m+|u|^{p_0(x)-2}u-f_m)u\big>
\end{equation}
It follows from the convergence (\ref{cv1}) that
\begin{equation}
\lim_{j\rightarrow\infty}\big<f^m,u^j\big>=\big<f^m,u\big>.
\end{equation}
Then, from the inequality (\ref{eqh}) and the convergence (\ref{cv.f. u_n1}) we have
\begin{align*}
&\lim_{j\rightarrow\infty}|\big<H^m(x,u^j,\nabla u^j)(u^j-u)\big>|\leq m \lim_{j\rightarrow\infty}\int_{\Omega(m)}|u^j-u|~dx\\ &\leq C(m)\lim_{j\rightarrow\infty}\|u^j-u\|_{\bar{p}(\cdot),\Omega(m)}=0.
\end{align*}
Using this fact and the convergence (\ref{cv fb}), we conclude that
\begin{equation}
\lim_{j\rightarrow\infty}\big<H^m(x,u^j,\nabla u^j)u^j\big>=\big<\tilde{H}^m. u\big>,
\end{equation}
which gives
\begin{equation}\label{in1}
\lim_{j\rightarrow\infty}\sup\big<a(x,T_m(u^j),\nabla u^j).\nabla u^j+|u^j|^{p_0(x)-2}u^j\big>\leq\big<\tilde{a}^m.\nabla u+|u|^{p_0(x)}\big>.
\end{equation}
On the other hand, thanks to the assumption (\ref{eqa1}) , we have
\begin{equation}
\big \langle(a(x,T_m(u^j),\nabla u^j)-a(x,T_m(u^j),\nabla u)).\nabla (u^j-u)\big \rangle
\end{equation}
$$+\big \langle(|u^j|^{p_0(x)-2}u^j-|u|^{p_0(x)-2}u)(u^j-u)\big\rangle\geq0.$$
Then
\begin{equation}
\begin{aligned}
\big<(a(x,&T_m(u^j),\nabla u^j).\nabla u^j+|u^j|^{p_0(x)}\big>\\&\geq\big<a(x,T_m(u^j),\nabla u)).\nabla u\big>+\big<a(x,T_m(u^j),\nabla u).\nabla (u^j-u)\big>\\&+\big<(|u^j|^{p_0(x)-2}u^ju\big>+\big<|u|^{p_0(x)-2}u)(u^j-u)\big>.
\end{aligned}
\end{equation}
Using (\ref{cv.p.p}) and (\ref{eqa1}) we obtain that
\begin{equation}\label{cv ffa}
a(x,T_m(u^j),\nabla u)\rightarrow a(x,T_m(u^j),\nabla u)~~\mbox{strongly in}~~L^{\vec{p}'(\cdot)}(\Omega), j \rightarrow\infty.
\end{equation}
In view of the convergence (\ref{cv.p.p}), we deduce that
 \begin{equation}\label{in2}
\lim_{j\rightarrow\infty}\inf\big<a(x,T_m(u^j),\nabla u^j).\nabla u^j+|u^j|^{p_0(x)-2}u^j\big>\geq\big<\tilde{a}^m.\nabla u+|u|^{p_0(x)}\big>.
\end{equation}
Combining (\ref{in1}) and (\ref{in2}) we have
 \begin{equation}\label{eg}
\lim_{j\rightarrow\infty}\big<a(x,T_m(u^j),\nabla u^j).\nabla u^j+|u^j|^{p_0(x)-2}u^j\big>=\big<\tilde{a}^m.\nabla u+|u|^{p_0(x)}\big>.
\end{equation}
and
 \begin{equation}\label{A}
\lim_{j\rightarrow\infty}\big<A^m(u^j),u^j\big>=\big<w,u\big>,
\end{equation}
by combining (\ref{cv1}), (\ref{cv fa}), (\ref{cv fb}), (\ref{cv ffa})  and (\ref{eg}) we get
\begin{equation}
\big \langle(a(x,T_m(u^j),\nabla u^j)-a(x,T_m(u^j),\nabla u)).\nabla (u^j-u)\big\rangle
\end{equation}
$$+\big\langle(|u^j|^{p_0(x)-2}u^j-|u|^{p_0(x)-2}u)(u^j-u)\big\rangle=0.$$
By Lemma \ref{cv gr} we have
\begin{equation}
\begin{aligned}
&\nabla u^j\rightarrow \nabla u ~~\mbox{in}~~L^{\vec{p}(\cdot)}(\Omega),~~ j\rightarrow\infty,\\
&\nabla u^j\rightarrow \nabla u ~~\mbox{a.e. in}~~~~\Omega,~~ j\rightarrow\infty.
\end{aligned}
\end{equation}
Then Lemma \ref{cv faible}
 $$
\tilde{a}^m=a(x,T_m(u),\nabla u),~~~~ \tilde{H}^m=H^m(x,u,\nabla u).
$$
By (\ref{A}) we conclude the result, and the existence of solutions for the approximate problem is proved.
\end{proof}


%
%



\end{document}